\documentclass[12pt, a4]{amsart}
\usepackage{amsmath, amssymb, amsthm}
\usepackage{geometry}
\usepackage[T1]{fontenc}
\usepackage{lmodern}
\usepackage{breqn}
\usepackage{adjustbox}
\usepackage{pdflscape}
\usepackage{booktabs} 
\usepackage{enumitem} 
\usepackage{makecell} 

\usepackage{fullpage}

\date{\today}

\newtheorem{theorem}{Theorem}[section]

\newtheorem{definition}[theorem]{Definition}

\newtheorem{proposition}[theorem]{Proposition}

\begin{document}

\title[A fractional thermostat eigenvalue problem]{On a nonlocal fractional thermostat eigenvalue problem}

\date{}

\author[G. Infante]{Gennaro Infante}
\address{Gennaro Infante, Dipartimento di Matematica e Informatica, Universit\`{a} della
Calabria, 87036 Arcavacata di Rende, Cosenza, Italy}%
\email{gennaro.infante@unical.it}%

\author[T. Zeghida]{Takieddine Zeghida}
\address{Department of Mathematics, Faculty of Science, University Badji Mokhtar Annaba, 23000, Annaba, Algeria}%
\email{takieddine.zeghida@univ-annaba.dz}%

\begin{abstract}
We study the existence of positive solutions for a parameter-dependent nonlocal boundary value problem involving a Caputo fractional derivative, which generalizes a classic thermostat model. Our approach extends previous work by considering two nonlinear functionals occurring in the boundary conditions and, crucially, by analyzing cases where the associated Green's function is not necessarily positive and is allowed to change sign. We employ a Birkhoff-Kellogg type theorem in cones to establish the existence of positive eigenvalues with associated eigenfunctions with given norms. Furthermore, we provide explicit intervals that localize the corresponding positive eigenvalues. The applicability of our theoretical framework is illustrated with examples.
\end{abstract}

\subjclass[2010]{Primary 34A08, secondary 34B10, 34B18}%
\keywords{Positive solution, nonlocal boundary conditions, fractional equation, Birkhoff--Kellogg type theorem, cone.}%

\maketitle
\section{Introduction}
In this paper we investigate the existence of eigenpairs $(u, \lambda)$ for the nonlocal Caputo fractional boundary value problem (BVP)
\begin{equation}\label{bvp-intro}
\begin{cases}
    ^{C}D^{\alpha}u(t) + \lambda f(t, u(t)) = 0, \quad t \in (0,1),  \\
    u'(0) + \lambda H_1[u] = 0,  \\
    \beta ^{C}D^{\alpha-1}u(1) + u(\eta) = \lambda H_2[u]. 
\end{cases}
\end{equation}
Here $1 < \alpha \le 2$, $\eta \in (0,1)$, $\beta > 0$, $^{C}D^{\alpha}$ is the Caputo fractional derivative, $\lambda > 0$ is a positive parameter, $f: [0,1] \times \mathbb{R} \to [0, \infty)$ is a continuous function, and $H_1, H_2: C[0,1] \to [0, \infty)$ are compact non-negative functionals.

The BVP~\eqref{bvp-intro} can be interpreted as a perturbation of the fractional thermostat model developed by Nieto and Pimentel in~\cite{nieto2013pimentel}, namely
\begin{equation}\label{bvp-Nieto}
\begin{cases}
    ^{C}D^{\alpha}u(t) +  f(t, u(t)) = 0, \quad t \in (0,1),  \\
    u'(0) = 
    \beta ^{C}D^{\alpha-1}u(1) + u(\eta) = 0,  
\end{cases}
\end{equation}
where the authors put emphasis on cases with a corresponding non-negative Green's function. The methodology in~\cite{nieto2013pimentel} relied on the classical Krasnosel'ski\u{\i}-Guo fixed point theorem.
The unique solvability of the BVP~\eqref{bvp-Nieto} has been investigated by Harjani, L\'{o}pez and Sadarangani~\cite{Harjani2025}, by means of a Rus’s fixed point theorem involving two metrics, by Caballero, Harjani and Sadarangani~\cite{Caballero2020}, via  the Banach contraction principle, and by Bai and Zhaı~\cite{bai2021}, who utlizied a fixed point theorem on the sum of two operators. The case of the interaction of two fractional thermostats has been discussed by Infante and Rihani in~\cite{gi-sr}, via fixed point index theory.
The results of Nieto and Pimentel were also complemented by
Cabada and Infante~\cite{cabada2014}, 
who dealt, by classical fixed point index,
with the existence of positive solutions, under boundary conditions of affine type that involve Stieltjes integrals, namely
$$
 u'(0)+\hat{H}[u]= \beta{}^C\!D^{\alpha-1}u(1)+u(\eta)=0,
$$
where
$$
\hat{H}[u]=\Lambda_0+\int_0^1 u(s)\,d\Lambda(s).
$$
More recently, Etemad and collaborators~\cite{Etemad2021} studied a generalization of the BVP~\eqref{bvp-Nieto}, by considering a more general fractional differential equation and more complex BCs that involve \emph{linear} functionals.

Regarding the existence and non-existence of parameter-dependent positive solutions, the problem
\begin{equation}\label{eigen-nieto}
\begin{cases}
    ^{C}D^{\alpha}u(t) +  \lambda f(t, u(t)) = 0, \quad t \in (0,1), \\
    u'(0) = 
    \beta ^{C}D^{\alpha-1}u(1) + u(\eta) = 0, 
\end{cases}
\end{equation}
has been investigated by Shen, Zhou and Yang~\cite{Shen2016}, by means of the Krasnosel'ski\u{\i}-Guo fixed point theorem, and by Hao and Zhang~\cite{Hao2019}, via fixed point index. The existence and uniqueness of positive solutions of the BVP~\eqref{eigen-nieto} has 
been investigated by Garai, Dey and Chanda~\cite{Garai2018} via fixed point theory, while linear eigenvalue problems were studied in ~\cite{Caballero2020, Harjani2025}.

Here, by framing the BVP~\eqref{bvp-intro} as a search for an eigenpair~$(u, \lambda)$, we use a version of the Birkhoff-Kellogg theorem in cones, due to Krasnosel'ski\u{\i} and Lady\v{z}enski\u{\i}~\cite{Kra-Lady}, to provide a unified framework for analyzing this generalized model.
Our work extends the above mentioned analyses to more complex scenarios, allowing boundary conditions describing the presence of nonlinear controllers (not necessarily of linear or affine manner) and parameter dependence. We stress that the functional setting chosen for the BCs is quite general and can be used to deal with nonlinear and nonlocal BCs, these are widely studied objects, we refer the reader to  
 the reviews~\cite{Cabada1, Conti, rma, sotiris, Stik, Whyburn} and the papers~\cite{Goodrich3, Goodrich4, kttmna, ktejde, Picone, jw-gi-jlms}.
We investigate the existence and nonexistence of eigenpairs, discussing in details the cases in which the kernel's positivity varies due to the paramters involved.
\section{Preliminaries}
We begin by recalling the definition of the Caputo derivative; for its properties we refer to the books~\cite{anast, dieth, pod, samk}. 

\begin{definition}
For a function $y:[0, +\infty)\to \mathbb{R}$,
the  Caputo  derivative of fractional order
  $\alpha>0$  is given by
$$
{}^C\!D^\alpha
y(t)=\frac{1}{\Gamma(n-\alpha)}\int^t_0\frac{y^{(n)}(s)}{(t-s)^{\alpha+1-n}}\,ds, \quad n=[\alpha] + 1,
$$
where $\Gamma$ denotes the Gamma function, that is
$$\Gamma (s)=\int_{0}^{+\infty}x^{s-1}e^{-x}dx,$$
and $[\alpha]$ denotes the integer part of a number $\alpha$.
\end{definition}

In~\cite{nieto2013pimentel} it is shown that
the linear BVP
\begin{equation*}\label{bvp-Nieto-lin}
    ^{C}D^{\alpha}u(t) +y(t) = 0, \quad 
    u'(0) = 
    \beta ^{C}D^{\alpha-1}u(1) + u(\eta) = 0,  
\end{equation*}
can be rewritten in the form
\begin{equation*}
u(t)=\int_0^1 K(t,s) f(s, u(s)) ds,
\end{equation*}
where the Green's function $K$ is continuous on  $[0,1]^2$ and has the form, 
\begin{equation*}\label{eq:kernel_K}
    K(t,s) := \beta + \frac{(\eta-s)^{\alpha-1}}{\Gamma(\alpha)}\chi_{[0,\eta]}(s) - \frac{(t-s)^{\alpha-1}}{\Gamma(\alpha)}\chi_{[0,t]}(s), 
\end{equation*}
where $\chi$ is the characteristic function.
Furthermore, it is known, see \cite{cabada2014}, that the affine, decreasing function
\begin{equation*}\label{eq:gamma_func}
\gamma(t) := \frac{\beta}{\Gamma(3-\alpha)} + \eta - t,
\end{equation*}
solves the BVP
\begin{equation*}
    ^{C}D^{\alpha}u(t) = 0, \quad 
    u'(0) +1= 
    \beta ^{C}D^{\alpha-1}u(1) + u(\eta) = 0,  
\end{equation*}
while the function identically equal to $1$ solves the BVP
\begin{equation*}
    ^{C}D^{\alpha}u(t) = 0, \quad 
    u'(0) = 
    \beta ^{C}D^{\alpha-1}u(1) + u(\eta) = 1. 
\end{equation*}
With these ingredients in mind, we rewrite the BVP~\eqref{bvp-intro} as a perturbed Hammerstein integral equation of the form 
\begin{equation}\label{phie}
u (t)= \lambda T(u)(t),\ t\in [0,1],
\end{equation}
where $T: C[0,1] \to C[0,1]$ is given by
\begin{equation} \label{eq:operator_T}
    T(u)(t) := H_1[u]\gamma(t) + H_2[u] + \int_0^1 K(t,s) f(s, u(s)) ds.
\end{equation}
\begin{definition}
The solution of the BVP~\eqref{bvp-intro} is understood in the mild sense, that is as a solution of the equation~\eqref{phie}; we refer to the works of Lan and Lin~\cite{lanfrac} and Cicho\'{n} and Salem~\cite{cichon2020salem}, for comments on regularity issues for equations involving Caputo derivatives.
\end{definition}

\begin{proposition}
  The operator $T$ defined in \eqref{eq:operator_T} maps $C[0,1] \to C[0,1]$ and is completely continuous.
\end{proposition}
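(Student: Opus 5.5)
We split $T$ as $T = T_1 + T_2$, where
\[
T_1(u)(t) := H_1[u]\gamma(t) + H_2[u], \qquad T_2(u)(t) := \int_0^1 K(t,s) f(s,u(s))\,ds,
\]
and show that each piece maps $C[0,1]$ into itself and is completely continuous. A finite sum of completely continuous operators is again completely continuous, so this gives the claim. Throughout, ``completely continuous'' means continuous and mapping bounded sets into relatively compact sets.

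For $T_1$, note that $\gamma$ is affine, hence continuous, and $1$ is constant. So $T_1(u)$ is a linear combination of two fixed continuous functions, with real coefficients $H_1[u]$ and $H_2[u]$. Since $H_1$ and $H_2$ are compact, they are continuous and send bounded sets to bounded subsets of $[0,\infty)$.
\begin{itemize}
\item \emph{Continuity.} $\|T_1(u)-T_1(v)\|_\infty \le |H_1[u]-H_1[v]|\,\|\gamma\|_\infty + |H_2[u]-H_2[v]|$, which tends to $0$ as $v\to u$.
\item \emph{Compactness.} For a bounded $B\subset C[0,1]$, the set $T_1(B)$ lies in the image of a bounded subset of $\mathbb{R}^2$ under the linear map $(a,b)\mapsto a\gamma+b$, whose range is finite-dimensional. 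Hence $T_1(B)$ is relatively compact.
\end{itemize}

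For $T_2$, the classical Hammerstein argument applies, using that $K$ is continuous on $[0,1]^2$ (as recalled from~\cite{nieto2013pimentel}). First we check that $K$ is indeed continuous, in particular at the jumps of the characteristic functions. At $s=\eta$ the term $(\eta-s)^{\alpha-1}$ vanishes because $\alpha>1$, and similarly at $s=t$. So $K$ is uniformly continuous on the compact square, with bound $M:=\max|K|$.

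Now fix a bounded set $\|u\|_\infty\le r$. The function $f$ is uniformly continuous on $[0,1]\times[-r,r]$, and we set $L_r := \max_{[0,1]\times[-r,r]} f$.
\begin{itemize}
\item \emph{$T_2(u)\in C[0,1]$ and equicontinuity.} We have
\[
|T_2(u)(t)-T_2(u)(\tau)| \le L_r\int_0^1 |K(t,s)-K(\tau,s)|\,ds.
\]
By uniform continuity of $K$ the right-hand side tends to $0$ as $\tau\to t$, uniformly in $u$. This shows both that $T_2(u)$ is continuous and that the family $\{T_2(u):\|u\|_\infty\le r\}$ is equicontinuous.
\item \emph{Uniform boundedness.} $\|T_2(u)\|_\infty \le M L_r$.
\end{itemize}
By the Arzel\`a--Ascoli theorem, $T_2$ maps bounded sets into relatively compact sets.

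Continuity of $T_2$ follows from $\|T_2(u)-T_2(v)\|_\infty \le M\sup_s|f(s,u(s))-f(s,v(s))|$ together with uniform continuity of $f$ on $[0,1]\times[-r,r]$, where $r$ bounds both $\|u\|_\infty$ and $\|v\|_\infty$.

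The only point needing genuine care is the continuity of $K$ across the lines $s=t$ and $s=\eta$, where the characteristic functions jump; as noted, $\alpha>1$ makes the power terms vanish there. Everything else is routine.
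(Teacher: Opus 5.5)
Your proof is correct and follows the same route as the paper. Both split $T$ into the Hammerstein part, handled by Arzel\`a--Ascoli using the continuity of $K$ and $f$, and the finite-rank perturbation $H_1[u]\gamma+H_2[u]$, handled via the compactness of $H_1,H_2$ and the continuity of $\gamma$; you additionally supply the details the paper calls routine, including the check that $K$ is continuous across $s=t$ and $s=\eta$.
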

\begin{proof}
Due to the continuity of the kernel $K$ and $f$, a routine application of the Arzel\`{a}-Ascoli theorem shows
that the operator $F(u)(t)=\int_0^1 K(t,s) f(s, u(s)) ds$ is completely continuous. The compactness of the perturbation term
$\Lambda(u)(t)=H_1[u]\gamma(t) + H_2[u]$ follows from the compactness of 
the functionals $H_1, H_2$ and the continuity of the fixed function $\gamma$.  
\end{proof}
In what follows, the positivity of the functions $\gamma$ and $K$ plays a key part.

\subsection{Positivity parameters}
We make use of the following numbers:
\begin{enumerate}[label=(\roman*)]
    \item \(\displaystyle \beta_K := \frac{(1 - \eta)^{\alpha - 1}}{\Gamma(\alpha)} \). The threshold for the kernel $K$ to be non-negative on \( [0,1]^2 \). If $\beta \ge \beta_K$ then $K(t,s) \ge 0$ for all $(t,s) \in [0,1]^2$. If $\beta < \beta_K$ then $K(t,s)$ changes sign on $[0,1]^2$, see~\cite{nieto2013pimentel}.
    \item \(\displaystyle \beta_\gamma := (1 - \eta)\Gamma(3 - \alpha) \): The threshold for \( \gamma(t) \) to be non-negative on \( [0,1] \). If $\beta \ge \beta_\gamma$, $\gamma(t) \ge 0$ for all $t \in [0,1]$. If $\beta < \beta_\gamma$, $\gamma(t)$ changes sign on $[0,1]$. 
    \item \( t_K := \eta + (\beta \Gamma(\alpha))^{1/(\alpha - 1)} \): The upper limit for \( t \) such that \( K(t,s) \ge 0 \) for all \( s \in [0,1] \).
    \item \( t_{\gamma}: = \eta + \frac{\beta}{\Gamma(3 - \alpha)} \): The upper limit for \( t \) such that \( \gamma(t) \ge 0 \).
\end{enumerate}
The interval where both $K(t,s)$ (for $(t,s) \in [0,t^*]\times [0,1]$) and $\gamma(t)$ are non-negative is $[0, t^*]$, where $ t^* = \min\{t_K, t_{\gamma} \}.$
Note that
if $\beta \ge \beta_K$, then we have $t_K \ge 1$. If $\beta \ge \beta_\gamma$, then we get $t_{\gamma} \ge 1$.
Thus, if $\beta \ge \max\{\beta_K, \beta_\gamma\}$, we obtain $t^* \ge 1$, which implies the non-negativity on the entire interval $[0,1]$.

\subsection{Upper and lower bounds for the Kernel $K$}
In our reasoning the sign and bounding properties of the kernel $K$ are crucial and depend on the parameter $\beta$. With these data we shall construct the corresponding cones. We require the existence of an interval 
 $[a,b] \subseteq [0,1]$, a function $\Phi \in  L^1[0,1]$ and a constant $c_{i,K} \in (0,1]$ such that
    \begin{align*}
    |K(t,s)| \leq \Phi(s), \ \text{for}&\ t,s \in [0,1], \\
    K(t,s) \geq c_{i,K} \Phi(s), \ \text{for}&\ t \in [a,b]\ \text{and}\ s \in
  [0,1].
    \end{align*}
    The specific forms of $\Phi(s)$ and $c_{i,K}$, and the choice of $[a,b]$, depend on the sign of $K(t,s)$, these were studied  in~\cite{nieto2013pimentel} and are presented in Table~\ref{tableK}.
\begin{table}
\centering
\begin{tabular}{| @{} >{\centering\arraybackslash}p{3cm}@{\hspace{0.2cm}}| >{\centering\arraybackslash}p{5.5cm} | >{\centering\arraybackslash}p{6.5cm} @{}|}
\toprule
\textbf{Sign of $K$}   & \textbf{\(\Phi(s)\)} & \textbf{ \(c_{i,K}\)}  \\
\midrule
\(\beta > \beta_K\),\, $K>0$  & \(\beta + \frac{\eta^{\alpha-1}}{\Gamma(\alpha)}\) & \( c_{1,K}=\frac{\beta\Gamma(\alpha)-(1-\eta)^{\alpha-1}}{\beta\Gamma(\alpha)+\eta^{\alpha-1}} \), \newline for $[a,b]=[0,1]$ \\
\midrule
\(\beta = \beta_K\),\, $K\geq 0$,\newline \(K(1,\eta)=0\) & \(\beta + \frac{\eta^{\alpha-1}}{\Gamma(\alpha)}\) & \( c_{2,K}= \frac{\beta\Gamma(\alpha)-(b-\eta)^{\alpha-1}}{\beta\Gamma(\alpha)+\eta^{\alpha-1}} \), \newline for $[a,b]=[0,b]$ with \(\eta \le b<1\)  \\
\midrule
\(\beta < \beta_K\),\, $K \gtreqless 0$ & \(\max\biggl\{\frac{\beta\Gamma(\alpha)+\eta^{\alpha-1}}{\Gamma(\alpha)}, \frac{(1-\eta)^{\alpha-1}-\beta\Gamma(\alpha)}{\Gamma(\alpha)} \biggr\}\) & \( c_{3,K}=\min \biggl\{ \frac{\beta\Gamma(\alpha)-(b-\eta)^{\alpha-1}}{\beta \Gamma(\alpha)+ \eta^{\alpha-1}}, \frac{\beta\Gamma(\alpha)-(b-\eta)^{\alpha-1}}{(1-\eta)^{\alpha-1}-\beta \Gamma(\alpha)} \biggr\}\), \newline for $[a,b]=[0,b]$, where $\eta \le b < 1$ and $\beta\Gamma(\alpha) > (b-\eta)^{\alpha-1}$ \\
\bottomrule
\end{tabular}
\caption{Sign properties of $K$.}
\label{tableK}
\end{table}

\subsection{Upper and lower bounds for the function $\gamma$}
For the function $\gamma$ we seek growth conditions similar to the ones of the kernel $K$, that is for the interval 
 $[a,b] \subseteq [0,1]$, we seek a constant $\sigma_{i,\gamma} \in (0,1]$ such that 
$$\min_{t \in [a,b]}\gamma(t) \geq c_{i,\gamma} \|\gamma\|_\infty, \quad \text{for}\ s \in [0,1].$$
Since the function $\gamma(t) = \frac{\beta}{\Gamma(3-\alpha)} + \eta - t$ is decreasing and remains positive
in the interval $[0,t_{\gamma})$, by direct calculation we obtain the results presented in Table~\ref{tablegam}.
\begin{table}
\begin{tabular}{|c|c|c|}
\toprule
 \textbf{Sign of $\gamma$} & \textbf{Choice of $[a,b]$} & \textbf{$\sigma_{i,\gamma}$} \\
\midrule
 $\beta > \beta_\gamma$, $\gamma>0$  &  $[a,b]=[0,1]$ & 
$\displaystyle \sigma_{1,\gamma} = \frac{\beta+(\eta-1)\Gamma(3-\alpha)}{\beta+\eta\Gamma(3-\alpha)}$ \\[6pt]
\midrule
 $\beta = \beta_\gamma$, $\gamma\geq 0$ &  $[a,b]=[0,b],\; b \in [\eta,1)$ & 
$\displaystyle \sigma_{2,\gamma} =  \frac{\beta+(\eta-b)\Gamma(3-\alpha)}{\beta+\eta\Gamma(3-\alpha)}$ \\[6pt]
\midrule
 $\beta < \beta_\gamma\, \gamma \gtreqless 0$&  $[a,b]\subset [0,t^*)$ & 
$\displaystyle \sigma_{3,\gamma} =  \frac{\beta+(\eta - b)\Gamma(3-\alpha)}{\max\!\left\{\beta+\eta\Gamma(3-\alpha),\; (1 -\eta)\Gamma(3-\alpha)- \beta\right\}}$ \\[6pt]
\bottomrule
\end{tabular}
\caption{Sign properties of $\gamma$.}\label{tablegam}
\end{table}

\section{Existence and localization of the eigenfunctions}
We use the following version of Birkhoff-Kellogg Theorem in cones due to
Krasnosel'ski\u{\i} and Lady\v{z}enski\u{\i}~\cite{Kra-Lady}, cf.~\cite[Theorem~5.5]{Krasno}.

\begin{theorem}[Birkhoff-Kellogg type Theorem in cones]\label{thm:bk}
Let $(X, \|\cdot\|)$ be a real Banach space, $K \subset X$ be a cone, and $U \subset X$ be a bounded open neighborhood of the origin $0 \in U$. Let $T: K \cap \bar{U} \to K$ be a compact operator such that
\[
\inf_{x \in K \cap \partial U} \|Tx\| > 0.
\]
Then there exist $\lambda_0 > 0$ and $x_0 \in K \cap \partial U$ such that $x_0 = \lambda_0 T x_0$.
\end{theorem}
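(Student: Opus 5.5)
The statement is the classical Krasnosel'ski\u{\i}--Lady\v{z}enski\u{\i} result, so I would prove it by a degree (fixed point index) argument rather than anything specific to the BVP. The idea is to show that the compact map $T$ cannot have the homotopy property "no solution of $x=\lambda Tx$ on $K\cap\partial U$ for every $\lambda>0$", since otherwise $T$ could be deformed to a map whose index at zero contradicts the index of a large translate.

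\emph{Setup.} Let $\delta:=\inf_{x\in K\cap\partial U}\|Tx\|>0$. By Dugundji's extension theorem, $T$ extends to a compact map $\tilde T:K\cap\bar U\to K$; since $K$ is a retract of $X$, the fixed point index $i_K(\cdot,K\cap U)$ is available for compact maps of $K\cap\bar U$ into $K$ without fixed points on $K\cap\partial U$.

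\emph{Argument by contradiction.} Assume $x\neq\lambda Tx$ for every $x\in K\cap\partial U$ and every $\lambda>0$. Consider the homotopy $h(x,\lambda)=\lambda Tx$ for $\lambda\in[0,R]$. It is compact, maps into $K$, and it has no fixed points on $K\cap\partial U$: for $\lambda>0$ this is the assumption, and for $\lambda=0$ it holds because $0\in U$, so $0\notin\partial U$. By homotopy invariance and normalization,
\[
i_K(RT,K\cap U)=i_K(0,K\cap U)=1 .
\]

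\emph{Key step (main obstacle): showing the index of $RT$ is $0$ for large $R$.} Since $U$ is bounded, pick $M$ with $\|x\|\le M$ on $\bar U$. Choose $R>M/\delta$, so $\|RTx\|>M$ on $K\cap\partial U$. To get index $0$, I would use the standard lemma: if there is $e\in K\setminus\{0\}$ with $x\neq Sx+\mu e$ for all $x\in K\cap\partial U$ and all $\mu\ge 0$, then $i_K(S,K\cap U)=0$. This lemma follows by homotoping to $S+\mu_0 e$ with $\mu_0$ so large that there are no fixed points in $K\cap\bar U$ at all.

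Producing such an $e$ uniformly is the delicate part, because $Tx$ varies. The cleaner route is to avoid this entirely by running the homotopy $\lambda\mapsto\lambda T$ on all of $[0,\infty)$. By the standing assumption there are no fixed points on $K\cap\partial U$ for any $\lambda$. For $\lambda>M/\delta$, however, there are no fixed points in $K\cap U$ either. Indeed, the set $\{x\in K\cap\bar U: x=\lambda Tx\}$ for large $\lambda$ must be empty once we know $\inf_{K\cap\bar U}\|Tx\|>0$. This infimum need not be positive in the interior, so instead I would use an extension argument: the map $x\mapsto Tx/\|Tx\|$ is compact and defined on $K\cap\partial U$; extend it by Dugundji to a compact map $S$ on $K\cap\bar U$ with values in $\overline{\mathrm{co}}\{Tx/\|Tx\|\}$. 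By Mazur's theorem this closed convex hull is compact, and it avoids $0$ after a standard adjustment. If $\overline{\mathrm{co}}$ contains $0$, replace it by the radial retraction onto $\{y\in K:\|y\|\ge\varepsilon\}$ intersected with a compact set. Then $\|RSx\|>M$ on all of $K\cap\bar U$ for large $R$, so $RS$ has no fixed points in $K\cap\bar U$ and $i_K(RS,K\cap U)=0$.

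\emph{Conclusion.} On $K\cap\partial U$, the maps $RS$ and $RT$ are positive multiples of each other, $RTx=(R\|Tx\|)\,Sx$. The homotopy $\mu\mapsto R\,(\mu\|Tx\|+(1-\mu))\,Sx$ therefore has no fixed points on $K\cap\partial U$, by the standing assumption, since its fixed points there would give $x=\lambda Tx$ for some $\lambda>0$. This yields
\[
i_K(RT,K\cap U)=i_K(RS,K\cap U)=0,
\]
which contradicts the value $1$ obtained above. Hence there exist $\lambda_0>0$ and $x_0\in K\cap\partial U$ with $x_0=\lambda_0Tx_0$.

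The step I expect to need the most care is building the compact extension $S$ whose range stays away from $0$. That is exactly where the hypothesis $\inf_{K\cap\partial U}\|Tx\|>0$ is used.
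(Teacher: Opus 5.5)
The paper gives no proof of this result (it is quoted from Krasnosel'ski\u{\i} and Lady\v{z}enski\u{\i}), so your argument must stand on its own. Its architecture is the standard one, and the index-$1$ part via the homotopy $\lambda T$, $\lambda\in[0,R]$, is correct.

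\textbf{The gap.} The step you yourself flag is not proved, and it is not routine. You need $0\notin\overline{\mathrm{co}}\,D$, where $D=\overline{\{Tx/\|Tx\|:\ x\in K\cap\partial U\}}$, so that $\|Sx\|\ge\varepsilon>0$ on all of $K\cap\bar U$. You only assert this holds ``after a standard adjustment'', and the adjustment you describe cannot work. The map $y\mapsto\max\{1,\varepsilon/\|y\|\}\,y$ is not continuous at $0$, and an extension $S$ may be forced to vanish, in which case no post-composition repairs it.

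The real issue is that your proof never uses that $K$ is a cone, i.e.\ $K\cap(-K)=\{0\}$. Every other step goes through verbatim for the wedge $K=X=\mathbb{R}^2$, where the statement is false. Take $T$ the rotation by $\pi/2$ on the closed unit disc: $\|Tx\|=1$ on the circle, yet $x=\lambda Tx$ with $\lambda>0$ is impossible since $Tx\perp x$. There $\overline{\mathrm{co}}\,D$ is the whole disc, and by degree every continuous extension $S$ of $T|_{\partial U}$ has a zero. So the missing step is precisely where pointedness must enter.

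\textbf{The fix.} It is a short lemma: if $D\subset K\setminus\{0\}$ is compact, then $0\notin\overline{\mathrm{co}}\,D$.
\begin{itemize}
\item[(1)] Let $m=\min_D\|\cdot\|>0$ and cover $D$ by finitely many closed balls $\bar B(z_j,m/2)$, $z_j\in D$.
\item[(2)] Suppose $y_n\in\mathrm{co}\,D$ and $y_n\to0$. Regroup each convex combination as $y_n=\sum_j s_{n,j}w_{n,j}$, with $s_{n,j}\ge0$, $\sum_j s_{n,j}=1$ and $w_{n,j}\in\overline{\mathrm{co}}\bigl(D\cap\bar B(z_j,m/2)\bigr)$. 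By Mazur's theorem the latter is a compact subset of $K\cap\bar B(z_j,m/2)$.
\item[(3)] Along a subsequence $s_{n,j}\to s_j$ and $w_{n,j}\to w_j$, so $0=\sum_j s_jw_j$ with some $s_{j_0}>0$.
\item[(4)] Then $s_{j_0}w_{j_0}=-\sum_{j\neq j_0}s_jw_j\in K\cap(-K)=\{0\}$, contradicting $\|w_{j_0}\|\ge m/2$.
\end{itemize}
With this lemma your index-$0$ argument is valid.

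\textbf{A smaller point.} Your homotopy $\mu\mapsto R(\mu\|Tx\|+1-\mu)Sx$ ends at $x\mapsto R\|Tx\|\,Sx$, which agrees with $RT$ only on $K\cap\partial U$. You therefore need one further linear homotopy between two maps coinciding on the boundary. It is simpler to apply Dugundji directly to $T|_{K\cap\partial U}$, with values in $\overline{\mathrm{co}}\,\overline{T(K\cap\partial U)}$, which avoids $0$ by the same lemma.
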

In order to apply this result, we work in the Banach space
$C[0,1]$, endowed with the usual supremum norm. For our growth estimates, we make use of the open bounded set $$\Omega_\rho = \{ u \in C[0,1] : ||u||_\infty < \rho \}.$$
In what follows we keep in mind the data present in the Tables~\ref{tableK},~\ref{tablegam} and build, accordingly, three different cones that have specific positivity properties.

\subsection{Case 1: $\beta > \max\{\beta_K, \beta_\gamma\}$}
Under this condition, we have that the kernel $K$ is strictly positive on $[0,1]^2$ and $\gamma(t) > 0$ on $[0,1]$. We define the cone 
\begin{equation} \label{eq:cone_P1_final_v4}
    P_{\sigma_1} := \Bigl\{ u \in C[0,1] \mid \min_{t \in [0,1]} u(t) \ge \sigma_1 ||u||_\infty \Bigr\},
\end{equation}
with $\sigma_1 = \min\{\sigma_{1,\gamma}, 1, c_{1,K}\}$, where $c_{1,K}$ and $\sigma_{1,\gamma}$ are as in the Tables~\ref{tableK},~\ref{tablegam}.

\begin{theorem}\label{thm:exist_case_A_combined}
Assume $\beta > \max\{\beta_K, \beta_\gamma\}$. Let $\rho > 0$ and assume the following conditions.
\begin{itemize}
    \item There exists a function $\underline{\delta}_\rho \in C([0,1],\mathbb{R}^+)$ such that $f(t, u) \ge \underline{\delta}_\rho(t)$ for all $(t,u) \in [0,1]\times [\sigma_1 \rho, \rho]$.
    \item  There exists $\eta_{1,\rho}, \eta_{2,\rho} \ge 0$ such that
\[
H_1[u] \ge \eta_{1,\rho} \ \text{and} \ H_2[u] \ge \eta_{2,\rho}, \ \text{for every}\ u \in P_{\sigma_1} \cap \partial \Omega_\rho.
\]
    \item The following algebraic inequality holds.
    \[
    \gamma(0) \eta_{1,\rho} + \eta_{2,\rho} + \int_0^1 K(0,s)\underline{\delta}_\rho(s)ds > 0.
    \]
\end{itemize}
Then there exist $\lambda_\rho >0$ and $u_\rho \in P_{\sigma_1} \cap \overline{\Omega_\rho}$ such that the couple $(\lambda_\rho,u_\rho)$ 
solves the BVP~\eqref{bvp-intro} and 
$\sigma_1 \rho \leq u_\rho(t) \leq \rho$ for every $t\in [0,1]$.
\end{theorem}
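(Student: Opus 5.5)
We apply Theorem~\ref{thm:bk} with $X=C[0,1]$, the cone $P_{\sigma_1}$ and $U=\Omega_\rho$. Three things must be checked: $T$ maps $P_{\sigma_1}\cap\overline{\Omega_\rho}$ into $P_{\sigma_1}$; $T$ is compact there; and $\inf_{u\in P_{\sigma_1}\cap\partial\Omega_\rho}\|Tu\|_\infty>0$. Compactness is the Proposition proved earlier. Once the theorem applies, we obtain $\lambda_\rho>0$ and $u_\rho\in P_{\sigma_1}\cap\partial\Omega_\rho$ with $u_\rho=\lambda_\rho Tu_\rho$. Then $\|u_\rho\|_\infty=\rho$, and the cone inequality gives $\sigma_1\rho\le u_\rho(t)\le\rho$.

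\textbf{Cone invariance.} Take $u\in P_{\sigma_1}$. We have $H_1[u],H_2[u]\ge0$ and $f\ge0$, and in Case~1 both $K>0$ and $\gamma>0$. Using $K(t,s)\le\Phi(s)$ and $\gamma(t)\le\|\gamma\|_\infty$, we get for every $t$
\[
Tu(t)\le H_1[u]\|\gamma\|_\infty+H_2[u]+\int_0^1\Phi(s)f(s,u(s))\,ds=:M .
\]
For the lower bound we use $\gamma(t)\ge\sigma_{1,\gamma}\|\gamma\|_\infty$ (Table~\ref{tablegam}, $[a,b]=[0,1]$), $K(t,s)\ge c_{1,K}\Phi(s)$ (Table~\ref{tableK}) and $H_2[u]\ge 1\cdot H_2[u]$. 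These give
\[
Tu(t)\ge\sigma_{1,\gamma}H_1[u]\|\gamma\|_\infty+H_2[u]+c_{1,K}\int_0^1\Phi f\,ds\ge\sigma_1 M\ge\sigma_1\|Tu\|_\infty .
\]
This is exactly why $\sigma_1=\min\{\sigma_{1,\gamma},1,c_{1,K}\}$. Hence $Tu\in P_{\sigma_1}$. The definition of the cone is not actually used in this step, only the nonnegativity of $H_i$ and $f$, so invariance holds on all of $P_{\sigma_1}$.

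\textbf{Lower bound on the boundary.} Let $u\in P_{\sigma_1}\cap\partial\Omega_\rho$. Then $\|u\|_\infty=\rho$ and $\sigma_1\rho\le u(s)\le\rho$ for every $s$, so $f(s,u(s))\ge\underline{\delta}_\rho(s)$. Evaluating at $t=0$ and using $K(0,s)\ge0$, $\gamma(0)>0$ and the bounds on $H_1,H_2$, we get
\[
\|Tu\|_\infty\ge Tu(0)\ge\gamma(0)\eta_{1,\rho}+\eta_{2,\rho}+\int_0^1K(0,s)\underline{\delta}_\rho(s)\,ds>0 .
\]
The right-hand side is independent of $u$, so the infimum is strictly positive.

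The main obstacle is the cone invariance. There one must combine the three different lower-bound constants for the $\gamma$ term, the constant term $H_2[u]$ and the integral term, all against a single common upper bound $M$. The key point is to bound each term separately above and below by matching multiples of the same quantity, rather than estimating $\|Tu\|_\infty$ directly. The rest is routine.
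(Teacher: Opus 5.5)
Your proposal is correct and follows the paper's proof essentially step for step. Cone invariance uses the same termwise sandwich of the $\gamma$-term, $H_2[u]$ and the integral between $\sigma_1 M$ and $M$, and the growth condition uses the same evaluation at $t=0$. The only (harmless) difference is that you use $\|Tu\|_\infty\ge Tu(0)$ rather than the paper's equality $\|Tu\|_\infty=Tu(0)$, so you do not need the monotonicity of $K$ and $\gamma$ in $t$.
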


\begin{proof}
We consider the completely continuous operator $T$ defined in \eqref{eq:operator_T} on the cone $P_{\sigma_1}$. The proof proceeds in two main steps.

\textbf{Step 1: Cone Invariance ($T(P_{\sigma_1} \cap \overline{\Omega_\rho}) \subset P_{\sigma_1}$)}.
Let $u \in P_{\sigma_1} \cap \overline{\Omega_\rho}$. We must show that $v = Tu$ belongs to $P_{\sigma_1}$.

Under the given condition on $\beta$, we have $\gamma(t) > 0$ and $K(t,s) > 0$. By the definitions of the constants $\sigma_{1,\gamma}$ and $c_{1,K}$, for every $t \in [0,1]$ we have the bounds $\gamma(t) \ge \sigma_{1,\gamma} ||\gamma||_\infty$ and $K(t,s) \ge c_{1,K} \Phi(s)$ for all $s \in [0,1]$.
Now note that, for every  $t \in [0,1]$, since $H_1, H_2$, and $f$ are non-negative we have
\begin{align*}
    v(t) :=& H_1[u]\gamma(t) + H_2[u] + \int_0^1 K(t,s) f(s, u(s)) ds \\
    &\ge H_1[u](\sigma_{1,\gamma} ||\gamma||_\infty) + H_2[u] \cdot 1 + \int_0^1 (c_{1,K} \Phi(s))f(s,u(s))ds,
\end{align*}
which, in turn, implies
\[
\min_{t \in [0,1]} v(t) \ge \sigma_1 \left( H_1[u]||\gamma||_\infty + H_2[u] + \int_0^1 \Phi(s)f(s,u(s))ds \right).
\]
Now note that
\[
||v||_\infty = \max_{t \in [0,1]} v(t) \le H_1[u]||\gamma||_\infty + H_2[u] + \int_0^1 \Phi(s)f(s,u(s))ds,
\]
which yields 
\[
\min_{t \in [0,1]} v(t) \ge \sigma_1 ||v||_\infty.
\]
Thus operator $T$ maps the set $P_{\sigma_1} \cap \overline{\Omega_\rho}$ into the cone $P_{\sigma_1}$.

\textbf{Step 2: Verification of the growth condition}.

We now show that $\inf_{u \in P_{\sigma_1} \cap \partial \Omega_\rho} ||Tu||_\infty > 0$.
Take $u \in P_{\sigma_1} \cap \partial \Omega_\rho$. This implies $||u||_\infty = \rho$ and therefore $u(t) \in [\sigma_1 \rho, \rho]$ for every $t \in [0,1]$.
In this case, both $\gamma$ and the kernel $K$ (for fixed $s$) are positive and monotonically decreasing with respect to $t$ on $[0,1]$. This implies that
\[
||Tu||_\infty = \max_{t \in [0,1]} Tu(t) = Tu(0).
\]
Using the lower bounds on $H_1, H_2$, and $f$ for $u \in P_{\sigma_1} \cap \partial \Omega_\rho$:
\begin{align*}
    ||Tu||_\infty = & H_1[u]\gamma(0) + H_2[u] + \int_0^1 K(0,s)f(s,u(s))ds \\
    &\ge \eta_{1,\rho}\gamma(0) + \eta_{2,\rho} + \int_0^1 K(0,s)\underline{\delta}_\rho(s)ds.
\end{align*}
By assumption, the right-hand side is a fixed positive number that is independent of the particular choice of $u$.  This establishes that
\[
\inf_{u \in P_{\sigma_1} \cap \partial \Omega_\rho} ||Tu||_\infty > 0.
\]
Since $T: P_{\sigma_1} \cap \overline{\Omega_\rho} \to P_{\sigma_1}$ is completely continuous and satisfies the conditions of the Theorem~\ref{thm:bk}, there exists an eigenfunction $u_\rho \in P_{\sigma_1}$ with $||u_\rho||_\infty = \rho$ with a corresponding positive eigenvalue $\lambda_\rho > 0$.
\end{proof}

\subsection{Case 2: $\beta = \max\{\beta_K, \beta_\gamma\}$}
This case addresses scenarios where the operator $T$ remains non-negative on $[0,1]$, but $K(t,s)$ or $\gamma(t)$ can be zero at $t=1$ or $(t,s)=(1,\eta)$. Specifically:
\begin{itemize}
    \item If $\beta = \beta_K$, then $K(1,\eta)=0$.
    \item If $\beta = \beta_\gamma$, then $\gamma(1)=0$.
\end{itemize}
In these situations, $t^*=1$. We fix $b \in [\eta,1)$ and define the cone
\begin{equation} \label{eq:cone_P2_caseB}
    P_{\sigma_2}: = \Bigl\{ u \in C[0,1] \mid u\geq 0,\min_{t \in [0,b]} u(t) \ge \sigma_2 ||u||_\infty \Bigr\},
\end{equation}
with $\sigma_2 = \min\{\sigma_{2,\gamma}, 1, c_{2,K}\}$, where $c_{2,K}$ and $\sigma_{2,\gamma}$ are as in the Tables~\ref{tableK},~\ref{tablegam}.

\begin{theorem}\label{thm:exist_case_B_combined}
Assume $\beta = \max\{\beta_K, \beta_\gamma\}$. 
Let $\rho > 0$ and assume the following conditions.
\begin{itemize}
    \item There exists a function $\underline{\delta}_\rho \in C([0,b],\mathbb{R}^+)$ such that $f(t, u) \ge \underline{\delta}_\rho(t)$ for all $(t,u) \in [0,b]\times [\sigma_2 \rho, \rho]$.
    \item  There exists $\eta_{1,\rho}, \eta_{2,\rho} \ge 0$ such that
\[
H_1[u] \ge \eta_{1,\rho} \ \text{and} \ H_2[u] \ge \eta_{2,\rho}, \ \text{for every}\ u \in P_{\sigma_2} \cap \partial \Omega_\rho.
\]
    \item The following algebraic inequality holds.
    \[
    \gamma(0) \eta_{1,\rho} + \eta_{2,\rho} + \int_0^b K(0,s)\underline{\delta}_\rho(s)ds > 0.
    \]
\end{itemize}
Then there exist $\lambda_\rho >0$ and $u_\rho \in P_{\sigma_2} \cap \partial \Omega_\rho$ such that the couple $(\lambda_\rho,u_\rho)$ 
solves the BVP~\eqref{bvp-intro}, $u_\rho (t)\geq 0$ for every $t\in [0,1]$, and
$\sigma_2 \rho \leq u_\rho(t) \leq \rho$ for every $t\in [0,b]$.
\end{theorem}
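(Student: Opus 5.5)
The plan is to mirror the proof of Theorem~\ref{thm:exist_case_A_combined}: apply Theorem~\ref{thm:bk} to the completely continuous operator $T$ from \eqref{eq:operator_T}, restricted to $P_{\sigma_2}\cap\overline{\Omega_\rho}$, with $U=\Omega_\rho$. Two things need checking: cone invariance $T(P_{\sigma_2}\cap\overline{\Omega_\rho})\subset P_{\sigma_2}$, and the growth condition $\inf_{u\in P_{\sigma_2}\cap\partial\Omega_\rho}\|Tu\|_\infty>0$. Once both hold, Theorem~\ref{thm:bk} gives $\lambda_\rho>0$ and $u_\rho\in P_{\sigma_2}\cap\partial\Omega_\rho$ with $u_\rho=\lambda_\rho Tu_\rho$. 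The membership $u_\rho\in P_{\sigma_2}$ together with $\|u_\rho\|_\infty=\rho$ then yields $u_\rho\ge 0$ on $[0,1]$ and $\sigma_2\rho\le u_\rho(t)\le\rho$ on $[0,b]$.

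\textbf{Cone invariance.} Since $\beta=\max\{\beta_K,\beta_\gamma\}$, we have $\beta\ge\beta_K$ and $\beta\ge\beta_\gamma$. Hence $K\ge 0$ on $[0,1]^2$ and $\gamma\ge 0$ on $[0,1]$. As $H_1,H_2,f\ge 0$, it follows that $v=Tu\ge 0$.

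Next I need lower bounds on $[0,b]$. For $\gamma$, monotonicity gives $\min_{[0,b]}\gamma=\gamma(b)$, and $\|\gamma\|_\infty=\gamma(0)$. A direct computation gives
\[
\frac{\gamma(b)}{\gamma(0)}=\frac{\beta+(\eta-b)\Gamma(3-\alpha)}{\beta+\eta\Gamma(3-\alpha)}=\sigma_{2,\gamma}.
\]
This holds whether or not $\beta=\beta_\gamma$, and it is positive because $b<1\le t_\gamma$.

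For $K$, I would check $K(t,s)\ge c_{2,K}\Phi(s)$ for $t\in[0,b]$, where $\Phi$ and $c_{2,K}$ are as in Table~\ref{tableK}. The point requiring care is the subcase $\beta>\beta_K$ with $\beta=\beta_\gamma$. There Table~\ref{tableK} lists $c_{1,K}$ on $[0,1]$. However, the expression $c_{2,K}$ with $b<1$ is still valid on $[0,b]$: it only uses $(t-s)^{\alpha-1}\le(b-\eta)^{\alpha-1}$ beyond the minimum. I would verify this by the same elementary estimate as in \cite{nieto2013pimentel}, splitting into $s\le\eta$ and $s>\eta$. Using also $\|K(\cdot,s)\|\le\Phi(s)$, I then get
\[
\min_{[0,b]}v\ge\sigma_2\Big(H_1[u]\|\gamma\|_\infty+H_2[u]+\int_0^1\Phi(s)f(s,u(s))\,ds\Big)\ge\sigma_2\|v\|_\infty .
\]

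\textbf{Growth condition.} Take $u\in P_{\sigma_2}\cap\partial\Omega_\rho$, so that $u(t)\in[\sigma_2\rho,\rho]$ for $t\in[0,b]$. Both $\gamma$ and $K(\cdot,s)$ are non-increasing in $t$, since $t\mapsto-(t-s)^{\alpha-1}\chi_{[0,t]}(s)$ is non-increasing. Therefore $\|Tu\|_\infty=Tu(0)$.

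At $t=0$ the last term of $K$ vanishes, so $K(0,s)\ge 0$. Dropping the integral over $[b,1]$, which is non-negative, I obtain
\[
\|Tu\|_\infty\ge\eta_{1,\rho}\gamma(0)+\eta_{2,\rho}+\int_0^b K(0,s)\underline{\delta}_\rho(s)\,ds>0 .
\]
This bound is independent of $u$, which gives the growth condition.

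\textbf{Main obstacle.} The main difficulty is the kernel lower bound in the mixed subcase, where only one of $\beta_K$ and $\beta_\gamma$ attains the maximum. There one has to justify that $c_{2,K}$ from Table~\ref{tableK} remains a valid constant on $[0,b]$, and that it lies in $(0,1]$. I would handle this by the direct estimate $K(t,s)\ge\beta-\frac{(b-\eta)^{\alpha-1}}{\Gamma(\alpha)}$ for $t\le b$, valid when $s\ge\eta$, with the case $s<\eta$ being easier. Dividing by $\Phi(s)=\beta+\eta^{\alpha-1}/\Gamma(\alpha)$ gives the claim. Positivity of $c_{2,K}$ follows from $(b-\eta)^{\alpha-1}<(1-\eta)^{\alpha-1}\le\beta\Gamma(\alpha)$.
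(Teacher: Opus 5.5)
Your proposal is correct and follows the paper's proof essentially step for step. It uses the same operator $T$ on $P_{\sigma_2}\cap\overline{\Omega_\rho}$, proves cone invariance through the bounds $\gamma\ge\sigma_{2,\gamma}\|\gamma\|_\infty$ and $K\ge c_{2,K}\Phi$ on $[0,b]$, and obtains the growth condition from $\|Tu\|_\infty=Tu(0)$ before applying Theorem~\ref{thm:bk}. Your extra checks fill in details the paper leaves implicit: that $Tu\ge 0$, and that $\sigma_{2,\gamma}$ and $c_{2,K}$ stay valid in the mixed subcases where only one of $\beta_K,\beta_\gamma$ equals $\beta$. Note that the case $s<\eta<t$ in that check relies on the subadditivity of $x\mapsto x^{\alpha-1}$.
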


\begin{proof}
We proceed in a similar way as in the previous case.

\textbf{Step 1: Cone invariance}.
Take $u \in P_{\sigma_2} \cap \overline{\Omega_\rho}$. 
Note that $$||v||_\infty \le H_1[u]||\gamma||_\infty + H_2[u] + \int_0^1 \Phi(s)f(s,u(s))ds.$$
Now take $t\in [0,b]$ then we have
\[
v(t) \ge H_1[u](\sigma_{2,\gamma} ||\gamma||_\infty) + H_2[u] + c_{2,K} \int_0^1 \Phi(s)f(s,u(s))ds,
\]
which implies 
\[
\min_{t \in [0,b]} v(t) \ge \sigma_2 ||v||_\infty.
\]
Thus we have $Tu \in P_{\sigma_2}$.

\textbf{Step 2: Verification of the growth condition}.

Let $u \in P_{\sigma_2} \cap \partial \Omega_\rho$. Then  $||u||_\infty= \rho$ and $u(t)\geq \sigma_2 ||u||_\infty$ for every $t\in [0,b]$.
The conditions on $\beta$ ensure that $\gamma(t)$ and $K(t,s)$ are non-negative and decreasing in $t$ on $[0,1]$. Therefore we have
\begin{multline*}
    ||Tu||_\infty=Tu(0) = H_1[u]\gamma(0) + H_2[u] + \int_0^1 K(0,s)f(s,u(s))ds \\
    \ge H_1[u]\gamma(0) + H_2[u] + \int_0^b K(0,s)f(s,u(s))ds\ge \eta_{1,\rho}\gamma(0) + \eta_{2,\rho} + \int_0^b K(0,s)\underline{\delta}_\rho(s)ds.
\end{multline*}
This lower bound is a fixed positive constant, independent of the choice of $u$. Therefore we have
\[
\inf_{u \in P_{\sigma_2} \cap \partial \Omega_\rho} ||Tu||_\infty > 0.
\]
which implies the thesis via Theorem~\ref{thm:bk}.
\end{proof}

\subsection{Case 3: $ \beta < \min\{\beta_K, \beta_\gamma\}$}

In this case, both the kernel $K(t,s)$ and the function $\gamma(t)$ change sign on their respective domains. However, they remain non-negative on an initial sub-interval $[0, t^*]$, where $t^* = \min(t_K, t_{\gamma}) \in (0,1)$. We fix $0<b<t^*$ and consider the cone 
\begin{equation} \label{eq:cone_P3_caseC}
    P_{\sigma_3} = \Bigl\{ u \in C[0,1] \mid \min_{t \in [0,b]} u(t) \ge \sigma_3 ||u||_\infty \Bigr\},
\end{equation}
with $\sigma_3 = \min\{\sigma_{3,\gamma}, 1, c_{3,K}\}$, where $c_{3,K}$ and $\sigma_{3,\gamma}$ are as in the Tables~\ref{tableK},~\ref{tablegam}.
Note that $P_{\sigma_3}$ is a cone of functions that are positive on $[0,b]$ and are allowed to change sign on $[0,1]$,
a type of cone has been introduced by  Infante and Webb in \cite{gijwjiea}
and is similar to
a cone of \emph{non-negative} functions
firstly used by Krasnosel'ski\u\i{}, see e.g.~\cite{krzab}, and D.~Guo, see e.g.~\cite{Guo1988}. 

\begin{theorem}\label{thm:exist_case_C_combined}
Assume $\beta < \min\{\beta_K, \beta_\gamma\}$. 
Let $\rho > 0$ and assume the following conditions.
\begin{itemize}
    \item There exists a function $\underline{\delta}_\rho \in C([0,b],\mathbb{R}^+)$ such that $f(t, u) \ge \underline{\delta}_\rho(t)$ for all $(t,u) \in [0,b]\times [\sigma_3 \rho, \rho]$.
    \item  There exists $\eta_{1,\rho}, \eta_{2,\rho} \ge 0$ such that
\[
H_1[u] \ge \eta_{1,\rho} \ \text{and} \ H_2[u] \ge \eta_{2,\rho}, \ \text{for every}\ u \in P_{\sigma_3} \cap \partial \Omega_\rho.
\]
    \item The following algebraic inequality holds.
    \[
    \gamma(0) \eta_{1,\rho} + \eta_{2,\rho} + \int_0^b K(0,s)\underline{\delta}_\rho(s)ds > 0.
    \]
\end{itemize}
Then there exist $\lambda_\rho >0$ and $u_\rho \in P_{\sigma_3} \cap \partial \Omega_\rho$ such that the couple $(\lambda_\rho,u_\rho)$ 
solves the BVP~\eqref{bvp-intro},  and
$\sigma_3 \rho \leq u_\rho(t) \leq \rho$ for every $t\in [0,b]$.
\end{theorem}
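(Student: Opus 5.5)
We argue as in Theorems~\ref{thm:exist_case_A_combined} and~\ref{thm:exist_case_B_combined}, applying Theorem~\ref{thm:bk} to $T$ from~\eqref{eq:operator_T} on $P_{\sigma_3}$ with $U=\Omega_\rho$. By the Proposition, $T$ is completely continuous. Two things remain to be checked: cone invariance $T(P_{\sigma_3}\cap\overline{\Omega_\rho})\subset P_{\sigma_3}$, and the growth condition $\inf_{u\in P_{\sigma_3}\cap\partial\Omega_\rho}\|Tu\|_\infty>0$.

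\textbf{Cone invariance.} Fix $u\in P_{\sigma_3}\cap\overline{\Omega_\rho}$ and set $v=Tu$. Since $H_1,H_2,f\ge 0$, $|\gamma(t)|\le\|\gamma\|_\infty$ and $|K(t,s)|\le\Phi(s)$ (with $\Phi$ as in the third row of Table~\ref{tableK}), we get for all $t\in[0,1]$
\[
|v(t)|\le H_1[u]\|\gamma\|_\infty+H_2[u]+\int_0^1\Phi(s)f(s,u(s))\,ds .
\]
Taking the supremum bounds $\|v\|_\infty$ by the same quantity. Note that $u$ may change sign outside $[0,b]$, but $f\ge0$ on all of $\mathbb{R}$, so only the non-negativity of $f$ matters here.

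For $t\in[0,b]\subset[0,t^*)$ we have $\gamma(t)\ge\sigma_{3,\gamma}\|\gamma\|_\infty$ and $K(t,s)\ge c_{3,K}\Phi(s)$ for all $s$, using Tables~\ref{tableK} and~\ref{tablegam}. The admissibility $\beta\Gamma(\alpha)>(b-\eta)^{\alpha-1}$ in Table~\ref{tableK} follows from $b<t_K$ (trivially if $b\le\eta$). Hence $\min_{[0,b]}v\ge\sigma_3\|v\|_\infty$, so $v\in P_{\sigma_3}$.

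This step is the main obstacle. One must confirm that the quantities in the tables really are the right constants when $\gamma$ and $K$ change sign. In particular, $\|\gamma\|_\infty=\max\{\gamma(0),-\gamma(1)\}$ is exactly the denominator of $\sigma_{3,\gamma}$ (up to the factor $\Gamma(3-\alpha)$), and $\Phi$ must dominate the negative values $K(1,s)$ as well. I would verify this by the explicit linear form of $\gamma$, and for $K$ by noting that $K(t,s)$ is decreasing in $t$. That gives $\max_t|K(t,s)|\le\max\{K(0,s),-K(1,s)\}\le\Phi(s)$ and $\min_{t\le b}K(t,s)=K(b,s)$, and I would then bound $K(b,s)$ below as in~\cite{nieto2013pimentel}.

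\textbf{Growth condition.} Let $u\in P_{\sigma_3}\cap\partial\Omega_\rho$. Then $\|u\|_\infty=\rho$, and $u(t)\in[\sigma_3\rho,\rho]$ on $[0,b]$ because $\sigma_3\rho\le\min_{[0,b]}u\le\rho$. Since the $\max$ is no longer $Tu(0)$, I would use $\|Tu\|_\infty\ge Tu(0)$ instead, which suffices. Now $\gamma(0)>0$ and $K(0,s)=\beta+(\eta-s)^{\alpha-1}\chi_{[0,\eta]}(s)/\Gamma(\alpha)>0$, so dropping the integral over $[b,1]$ (which is non-negative) gives
\[
\|Tu\|_\infty\ge Tu(0)\ge \eta_{1,\rho}\gamma(0)+\eta_{2,\rho}+\int_0^bK(0,s)\underline{\delta}_\rho(s)\,ds>0 .
\]
This bound is independent of $u$. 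Theorem~\ref{thm:bk} then yields $\lambda_\rho>0$ and $u_\rho\in P_{\sigma_3}\cap\partial\Omega_\rho$ with $u_\rho=\lambda_\rho Tu_\rho$, i.e.\ a mild solution of~\eqref{bvp-intro}, and the bounds $\sigma_3\rho\le u_\rho\le\rho$ on $[0,b]$ follow from membership in the cone with norm $\rho$.
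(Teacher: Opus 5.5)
Your proposal is correct and follows the paper's proof essentially step by step. You prove cone invariance on $P_{\sigma_3}\cap\overline{\Omega_\rho}$ using $\gamma(t)\ge\sigma_{3,\gamma}\|\gamma\|_\infty$ and $K(t,s)\ge c_{3,K}\Phi(s)$ on $[0,b]$ together with $\|Tu\|_\infty\le H_1[u]\|\gamma\|_\infty+H_2[u]+\int_0^1\Phi(s)f(s,u(s))\,ds$, then get the growth condition from $\|Tu\|_\infty\ge Tu(0)$ and conclude with Theorem~\ref{thm:bk}; your extra checks (monotonicity of $K$ in $t$, $\|\gamma\|_\infty=\max\{\gamma(0),-\gamma(1)\}$, and the admissibility of $b$ via $b<t_K$) only make explicit what the paper takes from Tables~\ref{tableK} and~\ref{tablegam}.
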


\begin{proof}
The proof relies on applying the Birkhoff-Kellogg type theorem to the operator $T$ on the cone $P_{\sigma_3}$.

\textbf{Step 1: Cone Invariance ($T(P_{\sigma_3} \cap \overline{\Omega_\rho}) \subset P_{\sigma_3}$)}.

Let $u \in P_{\sigma_3} \cap \overline{\Omega_\rho}$. We show that $v = Tu$ is in $P_{\sigma_3}$.
Recall that for $t \in [0,b] \subset [0,t^*)$, both $\gamma(t)$ and $K(t,s)$ are non-negative and we have $\gamma(t) \ge \gamma(b) = \sigma_{3,\gamma}\|\gamma\|_\infty$ and $K(t,s) \ge c_{3,K}\Phi(s)$. Thus we obtain
\begin{multline*}
    \min_{t \in [0,b]} v(t) \ge H_1[u]\,\sigma_{3,\gamma}\,\|\gamma\|_\infty + H_2[u] + c_{3,K} \int_0^1 \Phi(s)f(s,u(s))\,ds\\
    \geq \sigma_3 \left( H_1[u]\,\|\gamma\|_\infty + H_2[u] + \int_0^1 \Phi(s)f(s,u(s))\,ds \right),
\end{multline*}
that, combined with the estimate
\begin{equation*} \label{eq:norm_v_upper_C}
||v||_\infty \le H_1[u]||\gamma||_\infty + H_2[u] + \int_0^1 \Phi(s)f(s,u(s))ds,
\end{equation*}
yields $Tu \in P_{\sigma_3}$.

\textbf{Step 2: Verification of the growth condition}.

Let $u \in P_{\sigma_3} \cap \partial \Omega_\rho$. This implies $||u||_\infty = \rho$ and $\min_{t \in [0,b]} u(t) \ge \sigma_3 \rho$.
Note that 
\[
||Tu||_\infty \ge \sup_{t \in [0,b]} Tu(t) = Tu(0).
\]
We can now establish a strict positive lower bound using the hypotheses:
\begin{multline*}
    ||Tu||_\infty \ge Tu(0) = H_1[u]\gamma(0) + H_2[u] + \int_0^1 K(0,s)f(s,u(s))ds   \\
    \geq H_1[u]\gamma(0) + H_2[u] + \int_0^b K(0,s)f(s,u(s))ds \ge \eta_{1,\rho}\gamma(0) + \eta_{2,\rho} + \int_0^b K(0,s)\underline{\delta}_\rho(s)ds.
\end{multline*}
This lower bound is independent of the choice of $u$. Therefore we have,
\[
\inf_{u \in P_{\sigma_3} \cap \partial \Omega_\rho} ||Tu||_\infty > 0,
\]
which proves the thesis, by Theorem~\ref{thm:bk}.
\end{proof}

\section{Localization of the eigenvalues}

In this Section, we provide bounds for the eigenvalues obtained via the Birkhoff-Kellogg type theorem. We present a unified theorem that covers all three cases.

\begin{theorem}[Localization of the eigenvalues] \label{thm:eigenvalue_bounds}
Let the hypotheses of one of the existence theorems (for Case 1, 2, or 3) hold, guaranteeing an eigenpair $(u_\rho, \lambda_\rho)$ with $\lambda_\rho>0$ and $||u_\rho||_\infty = \rho$. In addition assume that the following conditions hold.
\begin{itemize}
    \item There exists $\overline{\delta}_\rho \in C([0,1], \mathbb{R}^+)$ such that $f(t, u) \le \overline{\delta}_\rho(s)$ for every $t \in [0,1]$ and for every $u \in [\tau_i\rho, \rho]$, where 
    $$\tau_i=
    \begin{cases}
    \sigma_1,\ \text{in Case 1},  \\
   0,\ \text{in Case 2},  \\
   -1,\ \text{in Case 3}.  
\end{cases}$$
    \item $H_1[u] \le \overline{\eta}_{1,\rho}$ and $H_2[u] \le \overline{\eta}_{2,\rho}$ for every  $u \in  P_{\sigma_i}\cap \partial  \Omega_\rho.$
\end{itemize}
Then the eigenvalue $\lambda_\rho$ is localized in the interval $[L(\rho), U(\rho)]$, where
\begin{align*}
L(\rho) &:= \frac{\rho}{\overline{\eta}_{1,\rho}||\gamma||_\infty + \overline{\eta}_{2,\rho} + \int_0^1 \Phi(s)\overline{\delta}_\rho(s)ds}, \\
U(\rho) &:= \frac{\rho}{\underline{\eta}_{1,\rho}\gamma(0) + \underline{\eta}_{2,\rho} + \int_0^b K(0,s)\underline{\delta}_\rho(s)ds},
\end{align*}
and $b=1$ in Case 1.
\end{theorem}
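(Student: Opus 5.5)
The plan starts from the eigenvalue identity $u_\rho = \lambda_\rho T(u_\rho)$ with $\|u_\rho\|_\infty=\rho$. Taking sup norms gives
\[
\rho=\lambda_\rho\|Tu_\rho\|_\infty, \qquad\text{that is,}\qquad \lambda_\rho=\frac{\rho}{\|Tu_\rho\|_\infty}.
\]
Localizing $\lambda_\rho$ therefore amounts to two-sided bounds on $\|Tu_\rho\|_\infty$. A lower bound on $\|Tu_\rho\|_\infty$ yields $U(\rho)$, and an upper bound yields $L(\rho)$. Here $\underline\eta_{i,\rho}$ is read as the constants $\eta_{i,\rho}$ of the existence theorems.

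\textbf{Upper bound for $\lambda_\rho$.} Step 2 of the proofs of Theorems~\ref{thm:exist_case_A_combined}, \ref{thm:exist_case_B_combined} and \ref{thm:exist_case_C_combined} already gives, for every $u\in P_{\sigma_i}\cap\partial\Omega_\rho$,
\[
\|Tu\|_\infty\ge Tu(0)\ge \eta_{1,\rho}\gamma(0)+\eta_{2,\rho}+\int_0^b K(0,s)\underline\delta_\rho(s)\,ds .
\]
In Case 1 we take $b=1$. We apply this with $u=u_\rho$, which lies in $P_{\sigma_i}\cap\partial\Omega_\rho$. The right-hand side is strictly positive by hypothesis, and dividing gives $\lambda_\rho\le U(\rho)$.

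\textbf{Lower bound for $\lambda_\rho$.} We use $|K(t,s)|\le\Phi(s)$ and $|\gamma(t)|\le\|\gamma\|_\infty$, together with the nonnegativity of $H_1$, $H_2$ and $f$. These give, for every $t$,
\[
|Tu_\rho(t)|\le H_1[u_\rho]\|\gamma\|_\infty+H_2[u_\rho]+\int_0^1\Phi(s)f(s,u_\rho(s))\,ds .
\]
We then apply the upper bounds $H_i[u_\rho]\le\overline\eta_{i,\rho}$. To bound $f(s,u_\rho(s))\le\overline\delta_\rho(s)$, we must check that $u_\rho(s)\in[\tau_i\rho,\rho]$ for all $s\in[0,1]$.
\begin{itemize}
\item In Case 1 this holds by the cone $P_{\sigma_1}$, so $\tau_1=\sigma_1$.
\item In Case 2 the cone $P_{\sigma_2}$ contains only nonnegative functions, so $u_\rho\ge 0$ and $\tau_2=0$.
\item In Case 3 we only know $\|u_\rho\|_\infty=\rho$, so $u_\rho(s)\in[-\rho,\rho]$ and $\tau_3=-1$.
\end{itemize}
This yields $\|Tu_\rho\|_\infty\le \overline\eta_{1,\rho}\|\gamma\|_\infty+\overline\eta_{2,\rho}+\int_0^1\Phi\,\overline\delta_\rho$, and hence $\lambda_\rho\ge L(\rho)$.

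The only delicate point is the range check in the lower bound. One must make sure the pointwise range of $u_\rho$ on the whole of $[0,1]$, not only on $[0,b]$, is covered by $[\tau_i\rho,\rho]$. This is precisely why $\tau_i$ differs between the cases. In Case 3 the sign-changing kernel is handled harmlessly by the absolute-value bound $|K|\le\Phi$, since $f\ge0$. Everything else is a direct reuse of the earlier estimates.
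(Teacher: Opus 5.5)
Your proposal is correct and follows essentially the same route as the paper. Like the paper, you bound $\rho=\lambda_\rho\|Tu_\rho\|_\infty$ from below through $Tu_\rho(0)$ with the lower bounds $\eta_{i,\rho}$, $\underline{\delta}_\rho$, and from above through $|K(t,s)|\le\Phi(s)$, $|\gamma|\le\|\gamma\|_\infty$ and the upper bounds $\overline{\eta}_{i,\rho}$, $\overline{\delta}_\rho$; your explicit check that $u_\rho$ takes values in $[\tau_i\rho,\rho]$ on all of $[0,1]$, which dictates the choice of $\tau_i$, is a point the paper leaves implicit.
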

\begin{proof}
Suppose that there exists an eigenpair $(u_\rho, \lambda_\rho)$ provided by one of the existence theorems. Then we have
\begin{multline}\label{ee1}
    \rho=\lambda_\rho\|Tu_\rho\|\geq \lambda_\rho |Tu_\rho(0) |\geq  \lambda_\rho \Bigl( H_1[u_\rho]\gamma(0) + H_2[u_\rho] + \int_0^b K(0,s)f(s,u_\rho((s))ds\Bigr)\\
\geq \lambda_\rho \Bigl(\underline{\eta}_{1,\rho}\gamma(0) + \underline{\eta}_{2,\rho} + \int_0^b K(0,s)\underline{\delta}_\rho(s)ds\Bigr).
\end{multline}
on the other hand
\begin{multline}\label{ee2}
    \rho=||T(u_\rho)||_\infty \le \lambda_\rho \Bigl(H_1[u_\rho]||\gamma||_\infty + H_2[u_\rho] + \int_0^1 \sup_{t \in [0,1]}|K(t,s)| f(s,u_\rho(s)) ds \Bigr)\\
    \le \lambda_\rho \Bigl(\overline{\eta}_{1,\rho}||\gamma||_\infty + \overline{\eta}_{2,\rho} + \int_0^1 \Phi(s)\overline{\delta}_\rho(s)ds\Bigr),
\end{multline}
The inequalities \eqref{ee1}-\eqref{ee2} provide the desired estimate $L(\rho)\leq \lambda_\rho \leq H(\rho)$.
\end{proof}

\section{Localization of the eigenpairs and illustrative examples}

We now apply Theorem \ref{thm:eigenvalue_bounds} to two concrete examples, one for the non-negative case (Case~2) and one for the sign-changing case (Case~3), to illustrate how the eigenpair $(\lambda_\rho, u_\rho)$ can be localized.

\subsection{Example for Case 2: Non-Negative Kernel}
Let $\alpha = 1.8$ and $\eta = 0.6$. The critical value for $\gamma(t)$ to be non-negative is $\beta_\gamma = (1-\eta)\Gamma(3-\alpha) = 0.4 \cdot \Gamma(1.2) \approx 0.367$. The critical value for $K(t,s)$ is $\beta_K = (1-\eta)^{\alpha-1}/\Gamma(\alpha) = (0.4)^{0.8}/\Gamma(1.8) \approx 0.516$. We choose $\beta = \beta_K \approx 0.516$ to be in Case 2.

Consider the BVP:
\begin{align*}
    ^{C}D^{1.8}u(t) + \lambda \left( t^2 + \frac{u^2(t)}{1+\rho^2} \right) &= 0, \quad t \in (0,1), \\
    u'(0) + \lambda \left( \frac{u(0.2)}{1+\rho} \right) &= 0, \\
    0.516 \cdot ^{C}D^{0.8}u(1) + u(0.6) &= \lambda \left( \frac{1}{10} \int_0^1 u^2(s) ds \right).
\end{align*}
For a solution with $||u||_\infty = \rho$, we can identify the bounds on the nonlinearities:
\begin{itemize}
    \item $f(t,u) = t^2 + u^2/(1+\rho^2)$. Thus, $\underline{\delta}_\rho(t) = t^2$ and $\overline{\delta}_\rho(t) = t^2 + \rho^2/(1+\rho^2)$.
    \item $H_1[u] = u(0.2)/(1+\rho)$. Thus, $\underline{\eta}_{1,\rho} = 0$ and $\overline{\eta}_{1,\rho} = \rho/(1+\rho)$.
    \item $H_2[u] = \frac{1}{10}\int_0^1 u(s)^2 ds$. Thus, $\underline{\eta}_{2,\rho} = 0$ and $\overline{\eta}_{2,\rho} = \rho^2/10$.
\end{itemize}
The eigenpair exists by Theorem~\ref{thm:exist_case_B_combined}. Applying the formulas from Theorem \ref{thm:eigenvalue_bounds}, we can compute the functions $L(\rho)$ and $U(\rho)$ that bound the eigenvalue $\lambda_\rho$.

\subsection{Example for Case 3: Sign-Changing Kernel}
Let $\alpha = 1.5$ and $\eta = 0.5$. The critical values are $\beta_K \approx 0.798$ and $\beta_\gamma \approx 0.443$. We choose $\beta = 0.1$ to ensure that we are in Case~3.

Consider the BVP:
\begin{align*}
    ^{C}D^{1.5}u(t) + \lambda \left( \frac{t}{1+u^2(t)} \right) &= 0, \quad t \in (0,1), \\
    u'(0) + \lambda \left( \frac{u(0.25)}{1+\rho} \right) &= 0, \\
    0.1 \cdot ^{C}D^{0.5}u(1) + u(0.5) &= \lambda \left( \frac{1}{20} \int_0^1 u^2(s) ds \right).
\end{align*}
For a solution with $||u||_\infty = \rho$, we identify the bounds:
\begin{itemize}
    \item $f(t,u) = t/(1+u^2)$. Thus, $\underline{\delta}_\rho(t) = t/(1+\rho^2)$ and $\overline{\delta}_\rho(t) = t$.
    \item $H_1[u] = u(0.25)/(1+\rho)$. Thus, $\underline{\eta}_{1,\rho} = 0$ and $\overline{\eta}_{1,\rho} = \rho/(1+\rho)$.
    \item $H_2[u] = \frac{1}{20}\int_0^1 u^2 (s) ds$. Thus, $\underline{\eta}_{2,\rho} = 0$ and $\overline{\eta}_{2,\rho} = \rho^2/20$.
\end{itemize}
Theorem~\ref{thm:exist_case_C_combined} provides existence of the eigenpairs.
Applying the formulas from Theorem~\ref{thm:eigenvalue_bounds}, we compute the functions $L(\rho)$ and $U(\rho)$ for the two examples and plot, by means of Python, the region of localizations of $(u_{\rho},\lambda_{\rho})$. This is illustrated in Figure~\ref{fig:exampleplot_positive}, where the eigenpairs are localized in the shaded region.

\begin{figure}[htbp]
    \centering
    \includegraphics[width=0.8\textwidth]{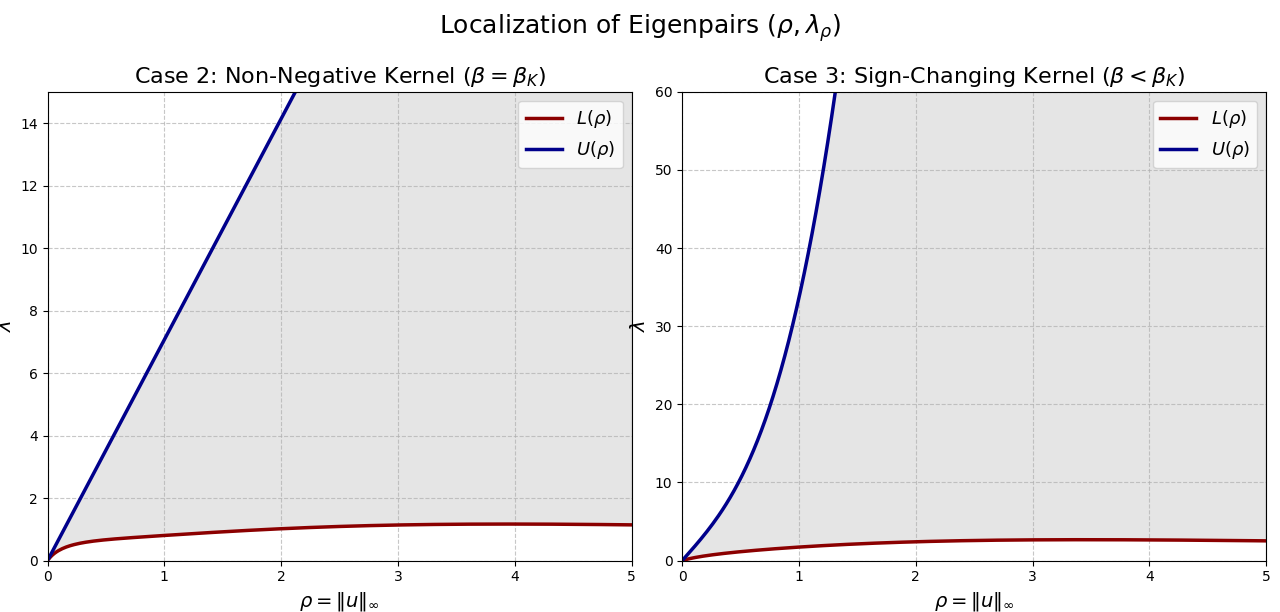} 
    \caption{Localization plot of $(u_{\rho},\lambda_{\rho})$ .}
    \label{fig:exampleplot_positive}
\end{figure}

\section*{Acknowledgements}
G.~Infante is a member of the Gruppo Nazionale per l'Analisi Matematica, la Probabilit\`a e le loro Applicazioni (GNAMPA) of the Istituto Nazionale di Alta Matematica (INdAM) and the ``The Research ITalian network on Approximation (RITA)''.
This work has been partially drafted during an Erasmus+ stay of T.~Zeghida at the University of Calabria. T.~Zeghida gratefully acknowledges Pr R.~Khaldi and A.~G.~Lakoud for providing the opportunity for this visit and thanks G.~Infante for the collaboration.

\section*{Funding}
G.~Infante has been partially supported by the GNAMPA and acknowledges the Italian Ministry of University and Research's grant [DD 170 24.09.2025 - DM MUR 737_2021] supporting the University of Calabria for financing research in ``Aree Disciplinari Sociali e Umanistiche'', Department of Economics, Statistics and Finance [DD 248/2025 17.12.2025]. Project title: ``Actuarial and decision-making models to support the planning and development of healthcare welfare in Southern Italy and Calabria''.

\section*{Conflicts of interest}
The authors declare no conflict of interest.

\section*{Contribution statement}
All authors contributed equally to this manuscript.

\end{document}